
\documentclass[10pt]{article}

\usepackage{amsfonts, amsmath, paralist}
\usepackage{caption, graphicx}
\usepackage[text={6.5in,9.25in},centering]{geometry}
\usepackage[numbers,comma,square,sort&compress]{natbib}


\setlength{\parskip}{1.0ex plus0.2ex minus0.2ex}
\setlength{\parindent}{0.0in}


\setlength{\unitlength}{1in}
\captionsetup{margin=0.2in, font=small, labelfont=up, textfont=it}


\newtheorem{hypothesis}{Hypothesis}

\newtheorem{definition}{Definition}

\newtheorem{theorem}{Theorem}

\newtheorem{lemma}{Lemma}

\makeatletter\@addtoreset{equation}{section}\makeatother

\newenvironment{proof}[1][.]%
 {\begin{trivlist}\item[]\textbf{Proof#1 }}%
 {\hspace*{\fill}$\rule{0.3\baselineskip}{0.35\baselineskip}$\end{trivlist}}


\def\Fix{\mathop\mathrm{Fix}\nolimits}  

\newcommand{\N}{\mathbb{N}}             
\newcommand{\R}{\mathbb{R}}             
\newcommand{\Z}{\mathbb{Z}}             

\newcommand{\rmO}{\mathrm{O}}           
\newcommand{\rmD}{\mathrm{D}}           
\newcommand{\rme}{\mathrm{e}}           
\newcommand{\rmi}{\mathrm{i}}           

\newcommand{\rpf}{\mathbf{u}} 
\newcommand{\wl}{k} 


\begin{document}

\title{Branches of localized patterned states}
\author{Bj\"orn Sandstede\\ Division of Applied Mathematics, Brown University}
\date{\today}
\maketitle

\begin{abstract}
Motivated by theoretical analyses of spatially localized structures with arbitrarily long periodic plateaus, we provide a framework of assumptions that simplifies their analysis and leads to a topological criterion for when localized patterned structures lie on a discrete stack of loops or on a single unbounded branch. The framework proposed here also connects closely with continuation algorithms that are often used to verify the hypotheses that guarantee the emergence of localized patterned states.
\end{abstract}


\section{Introduction}\label{s:0}

\begin{figure}
\centering
\includegraphics{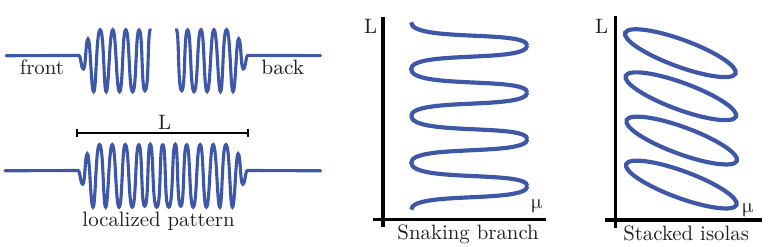}
\caption{The left panel illustrates the graphs of a patterned front $U_\mathrm{het}(x)$, its symmetric counterpart $U_\mathrm{het}(-x)$, which forms a patterned back, and a localized patterned state $U_L(x)$ as functions of $x$. The center and right panel contain an unbounded snaking branch and a stack of loops along which localized patterned states exist, where $L$ denotes the length of the patterned plateau and $\mu$ is a system parameter.}
\label{f:1}
\end{figure}

Spatially localized structures with long spatially periodic plateaus in their core have been observed in many spatially extended systems modeled by partial differential equations (PDEs), and we refer to the review articles \citep{Dawes, K1, K2, Lloyd} for an overview of old and new experimental, computational, and theoretical results that illustrate and explain the emergence of localized patterns. We focus on one-dimensional stationary localized patterns $U(x)$ with $x\in\R$ so that $U(x)\to0$ as $|x|\to\infty$ and so that there is an $L\gg1$ such that $U(x)$ resembles a spatially periodic function $U_\mathrm{wt}(x)$ for $|x|\leq L$; see Figure~\ref{f:1} for an illustration.

Consider a PDE that depends on a system parameter $\mu$ and assume that there is a closed loop of patterned fronts $U_\mathrm{het}(x,s)$ for $\mu=\mu(s)$ parametrized by $s\in S^1$ that connect $U=0$ to a spatially periodic profile $U_\mathrm{wt}(x,s)$ so that $U_\mathrm{het}(x,s)\to0$ as $x\to-\infty$ and $|U_\mathrm{het}(x,s)-U_\mathrm{wt}(x,s)|\to0$ as $x\to\infty$ for each $s$. In \citep{Snaking}, it was shown that this parametrized branch causes the emergence of spatially localized patterns $U_L(x)$ with arbitrarily long periodic plateaus of length $L$ that lie either on a discrete stack of closed branches (referred to as isolas) or on a single unbounded branch (referred to as snaking); see again Figure~\ref{f:1} for an illustration.

However, some of the hypotheses in \citep{Snaking} are formulated in a normal-form coordinate system near the periodic profile $U_\mathrm{wt}(x)$, and they rely on specific parametrizations of invariant manifolds in these coordinates that are difficult to verify numerically. In these notes, we provide a framework of hypotheses that implies the same results and involves only transversality criteria, which are easier to verify. As a byproduct, this framework also clarifies the distinction between stacked isolas and snaking through the homotopy class of an appropriate asymptotic phase function.


\section{A continuation framework for localized patterned states}\label{s:1}

We consider ordinary differential equations
\begin{equation}\label{e:ode}
u_x = f(u,\mu), \qquad u\in\R^4, \quad \mu\in\R
\end{equation}
where $f$ is $C^\infty$. We assume that this system admits a reverser.

\begin{hypothesis}\label{h:1}
There exists a linear map $\mathcal{R}:\R^4\to\R^4$ with $\mathcal{R}^2=1$ and $\dim\Fix\mathcal{R}=2$ so that $f(\mathcal{R}u,\mu)=-\mathcal{R}f(u,\mu)$ for all $(u,\mu)$.
\end{hypothesis}

We say that a solution of (\ref{e:ode}) is symmetric if $u(0)\in\Fix\mathcal{R}$. Symmetric solutions $u(x)$ satisfy $u(-x)=\mathcal{R}u(x)$ for all $x\in\R$. Next, we assume that $u=0$ is a hyperbolic equilibrium for all $\mu$.

\begin{hypothesis}\label{h:2}
The origin $u=0$ is a hyperbolic equilibrium of (\ref{e:ode}) so that $f(0,\mu)=0$ for all $\mu$ and $f_u(0,\mu)$ has two eigenvalues with strictly negative real part and another two eigenvalues with strictly positive real part.
\end{hypothesis}

Throughout, we set $S^1:=\R/(2\pi\Z)$ and define $\mathcal{X}:=\R\times\R^+\times C^1(S^1,\R^4)\times C^1(\R,\R^4)$. An element $\rpf_*=(\mu_*,\wl_*,u_\mathrm{wt}^*,u_\mathrm{het}^*)\in\mathcal{X}$ is said to be a \emph{patterned front} if it satisfies the following conditions:

\begin{definition}[Patterned front]\label{d:pf}
An element $\rpf_*=(\mu_*,\wl_*,u_\mathrm{wt}^*,u_\mathrm{het}^*)\in\mathcal{X}$ is called a \emph{patterned front} if the following is true:
\begin{compactenum}
\item The functions $u(x)=u_\mathrm{wt}^*(\wl_*x)$ and $u_\mathrm{het}^*(x)$ are solutions of (\ref{e:ode}) for $\mu=\mu_*$.
\item The periodic orbit $u(x)=u_\mathrm{wt}^*(\wl_*x)$ of (\ref{e:ode}) with $\mu=\mu_*$ satisfies $u_\mathrm{wt}^*(0)\in\Fix\mathcal{R}$, and it has two Floquet multipliers at $\rme^{\pm2\pi\alpha_*}$ for some $\alpha_*>0$.
\item We have $u_\mathrm{het}^*(x)\in W^\mathrm{s}(0,\mu_*)$, and there is a $\psi_*\in S^1$ so that $|u_\mathrm{het}^*(x)-u_\mathrm{wt}^*(\wl_*x+\psi_*)|\to0$ as $x\to-\infty$.
\end{compactenum}
\end{definition}

Assume that $\rpf_*=(\mu_*,\wl_*,u_\mathrm{wt}^*,u_\mathrm{het}^*)\in\mathcal{X}$ is a patterned front with the associated periodic orbit $u_\mathrm{wt}^*(\wl_*\cdot)$. The extended system
\[
\begin{pmatrix} u_x \\ \mu_x \end{pmatrix} = \begin{pmatrix} f(u,\mu) \\ 0 \end{pmatrix}, \qquad (u,\mu)\in\R^4\times\R
\]
then admits the invariant smooth manifolds $\widetilde{W}^\mathrm{cu}(u_\mathrm{wt}^*,\mu_*)$ and $\widetilde{W}^\mathrm{cs}(0,\mu_*)$ of dimension four and three, respectively, inside $\R^5$. We say that $\rpf_*$ is a \emph{regular patterned front} if it satisfies the following conditions:

\begin{definition}[Regular patterned front]\label{d:rpf}
An element $\rpf_*=(\mu_*,\wl_*,u_\mathrm{wt}^*,u_\mathrm{het}^*)\in\mathcal{X}$ is called a \emph{regular patterned front} if it is a patterned front and the manifolds $\widetilde{W}^\mathrm{cu}(u_\mathrm{wt}^*,\mu_*)$ and $\widetilde{W}^\mathrm{cs}(0,\mu_*)$ intersect transversely in $\R^5$ along the heteroclinic orbit $(u_\mathrm{het}^*(x),\mu_*)$.
\end{definition}

We know more about the manifolds $\widetilde{W}^\mathrm{cu}(u_\mathrm{wt}^*,\mu_*)$ and $\widetilde{W}^\mathrm{cs}(0,\mu_*)$. Firstly, we know that
\[
\widetilde{W}^\mathrm{cs}(0,\mu_*) = \bigcup_{\mu\;\mathrm{near}\;\mu_*} (W^\mathrm{s}(0,\mu),\mu)
\]
is given by the union of the stable manifolds of the equilibrium $u=0$ for (\ref{e:ode}) as the parameter $\mu$ is varied (with the $\mu$-component appended). Next, Definition~\ref{d:pf}(ii) states that $u_\mathrm{wt}^*(\wl_*\cdot)$ is a symmetric hyperbolic periodic orbit of (\ref{e:ode}) for $\mu=\mu_*$. It is not difficult to prove that $u_\mathrm{wt}^*(\wl_*\cdot)$ is therefore also elementary as defined in \cite[Definition~1.5]{D} (see Lemma~\ref{l:1} in \S\ref{s:3}). Hence, by \cite[Proposition~1.6]{D}, it lies on a smooth two-parameter family of symmetric periodic orbits $u_\mathrm{per}(x;\kappa,\mu)$ of (\ref{e:ode}) with minimal period $\wl(\kappa,\mu)$ near $\wl_*$, which is parametrized by an internal parameter $\kappa$ and the system parameter $\mu$ for all $\mu$ near $\mu_*$ so that $u_\mathrm{wt}^*(\wl_*x)=u_\mathrm{per}(x;0,\mu_*)$, and $\partial_\kappa u_\mathrm{per}(x;0,\mu_*)$ does not vanish. We then know that
\[
\widetilde{W}^\mathrm{cu}(u_\mathrm{wt}^*,\mu_*) = \bigcup_{\mu\;\mathrm{near}\;\mu_*} \left(W^\mathrm{cu}(u_\mathrm{per}(\cdot;0,\mu);\mu),\mu\right)
\]
is determined by the union of the center-unstable manifolds of the $\mu$-dependent periodic orbits $u_\mathrm{per}(\cdot;0,\mu)$ for (\ref{e:ode}) as $\mu$ varies, where the underlying center manifolds coincide for each fixed $\mu$ with the smooth two-dimensional manifold $\{u_\mathrm{per}(x;\kappa,\mu)\colon x\in\R,\,\kappa\mbox{ near }0\}$.

If $\rpf_*=(\mu_*,\wl_*,u_\mathrm{wt}^*,u_\mathrm{het}^*)\in\mathcal{X}$ is a regular patterned front, the tangent spaces of $\widetilde{W}^\mathrm{cu}(u_\mathrm{wt}^*,\mu_*)$ and $\widetilde{W}^\mathrm{cs}(0,\mu_*)$ computed at $u_\mathrm{het}^*(0)$ intersect in a two-dimensional space. In particular, the intersection of the manifolds $\widetilde{W}^\mathrm{cu}(u_\mathrm{wt}^*,\mu_*)$ and $\widetilde{W}^\mathrm{cs}(0,\mu_*)$ forms a two-dimensional manifold near $u_\mathrm{het}^*(x)$. Taken together with the characterization of these manifolds given above, this demonstrates that, after factoring out the time-shift invariance of these manifolds, regular patterned fronts $\rpf_*\in\mathcal{X}$ lie on locally unique, smooth one-parameter branches. Thus, given a regular patterned front $\rpf_*\in\mathcal{X}$, there is an $\eta>0$ and a smooth function $\rpf:(-\eta,\eta)\to\mathcal{X}$ with $\rpf(0)=\rpf_*$ so that $\rpf(s)$ is a regular patterned front for each $s\in(-\eta,\eta)$ and there are no other regular patterned states near $\rpf_*$ in $\mathcal{X}$ (except for translating its $u_\mathrm{het}$ component in the variable $x$). We assume that (\ref{e:ode}) admits a closed loop consisting of regular patterned states as encoded in the next hypothesis:

\begin{hypothesis}\label{h:3}
There is a function
\[
\mathcal{S} \colon \quad
\R/\Z \longrightarrow \mathcal{X}, \quad
s \longmapsto (\mu(s), \wl(s), u_\mathrm{wt}(\cdot;s), u_\mathrm{het}(\cdot;s))
\]
of class $C^1$ so that $\mathcal{S}(s)$ is a regular patterned front for each $s$ with $\mathcal{S}_s(s)\neq0$ for all $s$.
\end{hypothesis}

While we will not use this interpretation, we can think of $s$ as the arclength parametrization of a closed curve of regular patterned fronts.

We now return to the asymptotic phase $\psi_*\in S^1$ that we introduced in Definition~\ref{d:pf}(iii) for a patterned front and investigate how it changes along the loop of regular patterned fronts described by $\mathcal{S}$. The three-dimensional center-unstable manifolds $W^\mathrm{cu}(u_\mathrm{wt}(\wl(s)\cdot;s);\mu(s))$ inherit a Riemannian metric from the ambient space $\R^4$. For each $\delta>0$ sufficiently small, we use the resulting geodesic distance to define the two-dimensional manifolds $W^\mathrm{cu}_\delta(u_\mathrm{wt}(\wl(s)\cdot;s);\mu(s))$ as the set of all points in $W^\mathrm{cu}(u_\mathrm{wt}(\wl(s)\cdot;s);\mu(s))$ that have distance $\delta$ from the two-dimensional center manifold $W^\mathrm{c}(u_\mathrm{wt}(\wl(s)\cdot;s);\mu(s))$. We then redefine $u_\mathrm{het}(\cdot;s)$ so that $u_\mathrm{het}(0;s)\in W^\mathrm{cu}_\delta(u_\mathrm{wt}(\wl(s)\cdot;s);\mu(s))$ for all $s\in\R/\Z$; we note that this is simply a normalization of the time-shift of the heteroclinic orbit and that the resulting function $u_\mathrm{het}(\cdot;s)$ will still be $C^1$ in $s$. Starting with $s=0$, we then define the $C^1$-function $\psi:[0,1]\to S^1$ so that $\psi(s)$ is the unique phase of the base point of the strong unstable fiber of $u_\mathrm{wt}(\wl(s)\cdot;s)$ that contains $u_\mathrm{het}(0;s)$. The phase function $\psi$ is then automatically periodic in $s$ so that $\psi\in C^1(\R/\Z,S^1)$. As we shall see next, the phase function is important since it characterizes the nature of the branches of localized patterned states that are constructed by gluing regular patterned fronts and their symmetric counterparts together.

\paragraph{Path lifting:} Let $\pi\colon\R\to S^1, x\mapsto\rme^{\rmi x}$ be the universal covering projection of $S^1$. Given a path $\gamma\in C^0([0,1],S^1)$, we say that a path $\widetilde{\gamma}\in C^0([0,1],\R)$ is a lifting of $\gamma$ if $\gamma=\pi\circ\widetilde{\gamma}$. Given an integer $n$, there exists a unique lifting $\tilde{\gamma}$ of $\gamma$ with $\widetilde{\gamma}(0)=\gamma(0)+2\pi n$.

We now apply these concepts to the phase function $\psi$ considered as a path $\psi\in C^0([0,1],S^1)$, that is, after initially ignoring that $\psi(s)$ is periodic in its argument $s$. Let $\widetilde{\psi}\in C^0([0,1],\R)$ be the unique lifting of $\psi\in C^0([0,1],S^1)$ that satisfies $\widetilde{\psi}(0)=\psi(0)$. There are then two distinct cases:
\begin{compactitem}
\item \textbf{Isolas:} If $\psi$ is homotopic to a constant function in $C^0(\R/\Z,S^1)$, then necessarily $\widetilde{\psi}(0)=\widetilde{\psi}(1)$ and therefore $\widetilde{\psi}\in C^1(\R/\Z,\R)$.
\item \textbf{Snaking:} If $\psi$ is not homotopic to a constant function in $C^0(\R/\Z,S^1)$, then necessarily $\widetilde{\psi}(0)\neq\widetilde{\psi}(1)$. In this case, we can extend $\widetilde{\psi}\in C^0([0,1],\R)$ uniquely to a function $\bar{\psi}\in C^0(\R^+,\R)$ via $\bar{\psi}(s+n):=(\widetilde{\psi}(1)-\widetilde{\psi}(0))n+\widetilde{\psi}(s)$ for $n\in\N$ and $0\leq s<1$, and the function $\bar{\psi}$ is unbounded and in $C^1$. Possibly after reversing $s\mapsto-s$, we can assume that $\bar{\psi}(s)\to\infty$ as $s\to\infty$.
\end{compactitem}

Finally, for each $\epsilon>0$ and each $s\in S^1$, we define the $\epsilon$-neighborhood
\[
\mathcal{V}^\mathrm{het}_\epsilon(s) :=
U_\epsilon\left(\left\{u_\mathrm{het}(x;s)\colon x\in\R\right\}\right) \cup
U_\epsilon\left(\left\{\mathcal{R}u_\mathrm{het}(x;s)\colon x\in\R\right\}\right) 
\]
of the heteroclinic cycle consisting of the heteroclinic connection $u_\mathrm{het}(x;s)$ and its reversed counterpart as well as the $\epsilon$-neighborhood
\[
\mathcal{V}^\mathrm{wt}_\epsilon(s) := U_\epsilon\left(\left\{u_\mathrm{wt}(\wl(s)x;s)\colon x\in\R\right\}\right)
\]
of the wave train $u_\mathrm{wt}(\wl(s)x;s)$. We can now define what we mean by a localized patterned state.

\begin{definition}[Symmetric localized patterned states]\label{d:lps}
Given $\epsilon>0$ and $s\in\R/\Z$, we say that $u_\mathrm{hom}(x)$ is a localized patterned state if $u_\mathrm{hom}(x)$ is a solution of (\ref{e:ode}) for some $\mu$ near $\mu(s)$, we have $u_\mathrm{hom}(x)\in W^\mathrm{s}(0;\mu)\cap W^\mathrm{u}(0;\mu)$, the entire orbit $u_\mathrm{hom}(\cdot)$ lies in $\mathcal{V}^\mathrm{het}_\epsilon(s)$, and there is an $L\gg1$ so that $u_\mathrm{hom}(x)$ spends time $2L$ in the neighborhood $\mathcal{V}^\mathrm{wt}_\epsilon(s)$. For a given $\varphi_0\in\{0,\pi\}$, we say that $u_\mathrm{hom}$ is $\varphi_0$-symmetric if the orbit $u_\mathrm{hom}(\cdot)$  intersects $\Fix\mathcal{R}$ near $u_\mathrm{wt}(\wl(s)\varphi_0;s)$.
\end{definition}

We note that \citep[Lemma~3]{VF} shows that the last condition in Definition~\ref{d:lps} determines $\varphi_0\in\{0,\pi\}$ uniquely. We have the following theorem.

\begin{theorem}\label{t:1}
Assume that Hypotheses~\ref{h:1}-\ref{h:3} are met, then there are constants $\ell_*\gg1$, $\beta>0$, and $\epsilon>0$ so that the following is true separately for each $\varphi_0\in\{0,\pi\}$:
\begin{compactitem}
\item \textbf{Isolas:} If $\psi$ is homotopic to a constant function in $C^0(\R/\Z,S^1)$, then (\ref{e:ode}) has a $\varphi_0$-symmetric localized patterned state if and only if there are $s\in\R/\Z$ and $n\in\N$ with $n\geq\ell_*$ so that 
\[
(L,\mu) = (\widetilde{\psi}(s)+2\pi n-\varphi_0,\mu(s)) + \rmO(\rme^{-\beta n}).
\]
For each fixed $n$, the branch parametrized by $s\in\R/\Z$ forms a closed smooth loop.
\item \textbf{Snaking:} If $\psi$ is not homotopic to a constant function in $C^0(\R/\Z,S^1)$, then (\ref{e:ode}) has a $\varphi_0$-symmetric localized patterned state if and only if there is an $s\in\R$ with $s\geq\ell_*$ so that 
\[
(L,\mu) = (\bar{\psi}(s)-\varphi_0,\mu(s)) + \rmO(\rme^{-\beta s}),
\]
and the resulting branch is smooth and unbounded.
\end{compactitem}
\end{theorem}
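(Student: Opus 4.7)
The plan is to reduce the construction of $\varphi_0$-symmetric localized patterned states to a bifurcation equation along the loop $\mathcal{S}$ using Lin's method. By Hypothesis~\ref{h:1}, a $\varphi_0$-symmetric orbit is uniquely determined by its intersection with $\Fix\mathcal{R}$, so it suffices to find points $u^*\in\Fix\mathcal{R}$ near $u_\mathrm{wt}(\wl(s)\varphi_0;s)$ whose forward orbit lies in $W^\mathrm{s}(0,\mu)$ and shadows the wave train for length $L$ before peeling off along a perturbation of $u_\mathrm{het}(\cdot;s)$; the orbit of $u^*$ under the flow, reflected by $\mathcal{R}$, then automatically supplies the other half.

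I would then parametrize candidate half-orbits by triples $(s, L, \mu)$ with $s\in\R/\Z$, $L\geq\ell_*$, and $\mu$ near $\mu(s)$, and glue an inner piece on the wave-train manifold to an outer piece shadowing $u_\mathrm{het}(\cdot;s)$ at the exit section $W^\mathrm{cu}_\delta(u_\mathrm{wt}(\wl(s)\cdot;s);\mu(s))$. The transversality in Definition~\ref{d:rpf} makes the outer gluing a regular problem, and the smooth strong-stable/unstable foliations of the two-dimensional center manifold described after Definition~\ref{d:rpf} control the inner contribution. After projecting out the tangential directions, the bifurcation equation reduces to a phase-matching condition of the form
\[
L + \varphi_0 \equiv \widetilde\psi(s) \pmod{2\pi}, \qquad \mu = \mu(s),
\]
modulo errors of size $\rmO(\rme^{-\beta L})$, with $\beta$ controlled by the spectral gap at $u = 0$ from Hypothesis~\ref{h:2} and the Floquet exponent $\alpha_*$ at $u_\mathrm{wt}^*$.

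Rewriting the phase condition as $L = \widetilde\psi(s) + 2\pi n - \varphi_0$ for $n\in\N$ with $n\geq\ell_*$, and applying the implicit function theorem in $(L, \mu)$ for each fixed $(s, n)$, will then yield the asymptotic expansion claimed in the theorem; uniqueness of the solution near the leading-order values accounts for the ``only if'' direction. The dichotomy between the two cases follows directly from the path-lifting discussion preceding the theorem: if $\psi$ is null-homotopic, then $\widetilde\psi\in C^1(\R/\Z,\R)$, so for each fixed $n\geq\ell_*$ the map $s\mapsto(\widetilde\psi(s)+2\pi n-\varphi_0,\mu(s))$ traces a smooth closed loop; if $\psi$ is not null-homotopic, the unbounded lift $\bar\psi$ makes $s\in[\ell_*,\infty)$ into a global parameter that produces a single unbounded branch.

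The main technical obstacle will be setting up the Lin gluing so that the error estimates are smooth and uniform in $s\in\R/\Z$ as $u_\mathrm{het}(\cdot;s)$ varies along $\mathcal{S}$. One has to track carefully how the strong stable and unstable foliations of the wave-train center manifold depend on $s$, and verify that the normalization of $u_\mathrm{het}(0;s)$ on $W^\mathrm{cu}_\delta$ combined with the definition of $\psi(s)$ as the base point of the strong unstable fiber containing $u_\mathrm{het}(0;s)$ makes the asymptotic phase enter the bifurcation equation linearly; this is precisely the mechanism that renders the homotopy-class criterion equivalent to the dichotomy between isolas and snaking.
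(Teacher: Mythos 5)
Your overall strategy coincides with the paper's: pass to normal-form/foliation coordinates near the wave train, use reversibility to construct only the half-orbit starting on $\Fix\mathcal{R}$ near $u_\mathrm{wt}(\wl(s)\varphi_0;s)$, match it at an exit section with $W^\mathrm{s}(0;\mu)$ using the transversality of Definition~\ref{d:rpf}, and read off the isola/snaking dichotomy from the lift of $\psi$. The leading-order phase condition $L\equiv\widetilde{\psi}(s)-\varphi_0$ modulo $2\pi$ and the role of the homotopy class are exactly as in the paper's argument.

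However, your final solving step has a genuine gap. The matching condition at the exit section $\Sigma_\mathrm{out}$ is a system of \emph{three} scalar equations: the phase, the internal parameter $v^\kappa_0$ selecting which member of the two-parameter family $u_\mathrm{per}(\cdot;\kappa,\mu)$ of periodic orbits is shadowed, and the strong-stable coordinate. You therefore need three unknowns besides $s$, and your reduction to the two equations $L+\varphi_0\equiv\widetilde{\psi}(s)$ and $\mu=\mu(s)$, solved by the implicit function theorem in $(L,\mu)$ alone, both drops the unknown $v^\kappa_0$ and silently assumes that the strong-stable matching equation can always be solved for $\mu$. In the paper's notation that last equation reads $\rmO(\rme^{-\beta L})=c_1(s)\nu+c_2(s)a+\rmO(\nu^2+a^2)$, where $\nu=\mu-\mu(s)$ and $a$ parametrizes the curve $W^\mathrm{s}(0;\mu(s))\cap\Sigma_\mathrm{out}$; solving it for $\nu$ requires $c_1(s)\neq0$, which fails precisely at fold points of the heteroclinic branch, and such points cannot be excluded along the closed loop $\mathcal{S}$. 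The transversality of the regular patterned front only guarantees $c_1(s)^2+c_2(s)^2>0$, and the paper exploits exactly this by over-parametrizing $W^\mathrm{s}(0;\mu(s)+\nu)\cap\Sigma_\mathrm{out}$ by $(\nu,a)$ and switching to the unknown triple $(L,v^\kappa_0,a)$ with $\nu=0$ whenever $c_1(s)=0$. Without this device your implicit-function-theorem step degenerates at those values of $s$, and the resulting branch would not be globally smooth in $s$; incorporating the pair $(\nu,a)$ and the unknown $v^\kappa_0$ closes the gap and recovers the stated expansion in both cases.
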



\section{Outline of the proof of Theorem~\ref{t:1}}\label{s:2}

Throughout, we assume that Hypotheses~\ref{h:1}-\ref{h:3} are met.

First, for each $\mu$ near $\mu(s)$, we can smoothly straighten out the two-dimensional center manifold of the wave trains $u_\mathrm{wt}(\wl(s)x;s)$, which is fibered by the two-parameter family of periodic orbits, and the corresponding strong stable and unstable invariant foliations. After this local coordinate transformation, these manifolds and foliations no longer depend on $\mu$. As outlined in \citep{Snaking}, the vector field (\ref{e:ode}) near the wave trains $u_\mathrm{wt}(\wl(s)x;s)$ for $\mu$ near $\mu(s)$ can then be transformed into the normal form
\begin{eqnarray}
v^\mathrm{c}_x & = & 1 + \tilde{h}^\kappa(v^\kappa,\mu) v^\kappa + h^\mathrm{c}(v,\mu) v^\mathrm{s} v^\mathrm{u} \nonumber \\ \label{e:nf}
v^\kappa_x & = & h^\kappa(v,\mu) v^\mathrm{s} v^\mathrm{u} \\ \nonumber
v^\mathrm{s}_x & = & -[\alpha(\mu) + h^\mathrm{s}_1(v,\mu) v^\mathrm{s} + h^\mathrm{s}_2(v,\mu) v^\mathrm{u}] v^\mathrm{s} \\ \nonumber
v^\mathrm{u}_x & = & [\alpha(\mu) + h^\mathrm{u}_1(v,\mu) v^\mathrm{s} + h^\mathrm{u}_2(v,\mu) v^\mathrm{u}] v^\mathrm{u},
\end{eqnarray}
where $v=(v^\mathrm{c},v^\kappa,v^\mathrm{s},v^\mathrm{u})\in\mathcal{V}^\mathrm{wt}_\delta:=S^1\times I\times I\times I$ for $I=[-\delta,\delta]$ with $\delta>0$ small enough. The reverser $\mathcal{R}$ acts on $v$ via
\[
\mathcal{R}(v^\mathrm{c},v^\kappa,v^\mathrm{s},v^\mathrm{u}) = (-v^\mathrm{c},v^\kappa,v^\mathrm{u},v^\mathrm{s}).
\]
In these coordinates, we define the sections
\begin{equation}\label{e:Sigma}
\Sigma_\mathrm{in} = S^1\times I\times\{v^\mathrm{s}=\delta\} \times I, \qquad
\Sigma_\mathrm{out} = S^1\times I\times I\times\{v^\mathrm{u}=\delta\}.
\end{equation}
We can solve (\ref{e:nf}) as in \citep[Lemma~3.1]{Snaking}: For each $L\gg1$, $\varphi_0\in\{0,\pi\}$, $v^\kappa_0:=v^\kappa(0)\in I$, and $\mu$ near $\mu(s)$, the system (\ref{e:ode}) has a unique reversible solution $v(x)$ that passes inside $\mathcal{V}^\mathrm{wt}_\delta$ from $\Sigma_\mathrm{in}$ to $\Sigma_\mathrm{out}$ in time $2L$, and we have the expansion
\begin{equation}\label{e:vl}
v(L) = \left(\varphi_0+(1+\rmO(v^\kappa_0))L+\rmO(\rme^{-\beta L}),v^\kappa_0+\rmO(\rme^{-\beta L}),\rmO(\rme^{-\beta L}),\delta\right).
\end{equation}
for some $\beta>0$. Furthermore, these solutions depend smoothly on $(\mu,L)$.

Next, we consider the intersection of $W^\mathrm{s}(0;\mu)$ with $\Sigma_\mathrm{out}$ for $\mu$ near $\mu(s)$. Since $u_\mathrm{het}(\cdot;s)$ is a regular patterned front, Definition~\ref{d:rpf} implies that for each fixed $s$ the manifold $W^\mathrm{s}(0;\mu(s))$ intersects the section $\Sigma_\mathrm{out}$ transversely in a one-dimensional manifold that contains $u_\mathrm{het}(0;s)$. Hence, for each fixed $\mu=\mu(s)+\nu$ with $|\nu|\ll1$, the manifold $W^\mathrm{s}(0;\mu)$ will also intersect $\Sigma_\mathrm{out}$ transversely in a curve. We denote by $w^\mathrm{s}(s)$ a nonzero basis vector in the tangent space of $W^\mathrm{s}(0;\mu(s))\cap\Sigma_\mathrm{out}$ at $u_\mathrm{het}(0;s)$ so that  $w^\mathrm{s}(s)$ depends smoothly on $s$. We can then parametrize the intersections $W^\mathrm{s}(0;\mu(s)+\nu)\cap\Sigma_\mathrm{out}$ by $(\nu,a)$, where $a\mapsto aw^\mathrm{s}(s)$ parametrizes the tangent space of the intersection at $\nu=0$. Using the definition of the asymptotic phase function $\psi(s)$, we conclude that
\begin{equation}\label{e:wsout}
W^\mathrm{s}(0;\mu(s)+\nu) \cap \Sigma_\mathrm{out} =
\left\{ v=\begin{pmatrix}v^\mathrm{c} \\ v^\kappa \\ v^\mathrm{s} \\ \delta \end{pmatrix}\in\Sigma_\mathrm{out}\colon
\begin{pmatrix} v^\mathrm{c} \\ v^\kappa \\ v^\mathrm{s} \end{pmatrix} =
\begin{pmatrix} \psi(s)+\rmO(|\nu|+|a|) \\ \rmO(|\nu|+|a|) \\ c_1(s)\nu+c_2(s)a+\rmO(\nu^2+a^2) \end{pmatrix},\;
a\in I \right\}
\end{equation}
for constants $c_1(s),c_2(s)\in\R$. Since $u_\mathrm{het}(\cdot;s)$ is a regular patterned front, and since the center manifold and the strong stable and unstable invariant foliations of the wave train no longer depend on $\mu$ due to our local coordinate transformation, Definition~\ref{d:rpf} implies that $c_1(s)^2+c_2(s)^2>0$ for all $s$. We remark that we use the three parameters $(s,\nu,a)$ to parametrize the two-dimensional manifold in (\ref{e:wsout}), and we can therefore always set $\nu=0$ or $a=0$.

We are now in a position to construct symmetric localized patterned states that spent time $2L$ in $\mathcal{V}^\mathrm{wt}_\delta$ by matching $v(L)$ from (\ref{e:vl}) with the manifold $W^\mathrm{s}(0;\mu(s)+\nu)\cap\Sigma_\mathrm{out}$ that we characterized in (\ref{e:wsout}). Using (\ref{e:vl}) and (\ref{e:wsout}), the matching condition $v(L)\in W^\mathrm{s}(0;\mu(s)+\nu)\cap\Sigma_\mathrm{out}$ becomes
\begin{equation}\label{e:m}
\begin{pmatrix} \varphi_0+(1+\rmO(v^\kappa_0))L+\rmO(\rme^{-\beta L}) \\ v^\kappa_0+\rmO(\rme^{-\beta L}) \\ \rmO(\rme^{-\beta L}), \end{pmatrix} =
\begin{pmatrix} \psi(s)+\rmO(|\nu|+|a|) \\ \rmO(|\nu|+|a|) \\ c_1(s)\nu+c_2(s)a+\rmO(\nu^2+a^2) \end{pmatrix}
\end{equation}
in $S^1\times I\times I$, where $L\gg1$ and $(v^\kappa_0,a,\nu)$ are close to zero.

\paragraph{Isolas:} Assume first that $\psi$ is homotopic to a constant function in $C^0(\R/\Z,S^1)$, so that the lifting $\widetilde{\psi}:[0,1]\to\R$ is periodic in $s$. In this case, we can lift the first equation in (\ref{e:m}) to $\R$. Given any natural number $n\in\N$, we obtain
\[
\begin{pmatrix} \varphi_0+(1+\rmO(v^\kappa_0))L+\rmO(\rme^{-\beta L}) \\ v^\kappa_0+\rmO(\rme^{-\beta L}) \\ \rmO(\rme^{-\beta L}), \end{pmatrix} =
\begin{pmatrix} 2\pi n+\widetilde{\psi}(s)+\rmO(|\nu|+|a|) \\ \rmO(|\nu|+|a|) \\ c_1(s)\nu+c_2(s)a+\rmO(\nu^2+a^2) \end{pmatrix}
\]
in $\R\times I\times I$, where the right-hand side is periodic in $s$, and where we recall that $c_1(s)^2+c_2(s)^2>0$. For each sufficiently large $n\gg1$, we can solve this system uniquely for $(L,v^\kappa_0,\nu)$ with $a=0$ as a function of $s$ when $c_1(s)\neq0$ and for $(L,v^\kappa_0,a)$ with $\nu=0$ as a function of $s$ when $c_1(s)=0$. We note that the first case is the generic case (since we are then away from saddle-node bifurcations of the heteroclinic connections $u_\mathrm{het}(\cdot,s)$). In either case, uniformly in $n\gg1$, we have the expansion
\[
(L,v^\kappa,\mu) = (\widetilde{\psi}(s) + 2\pi n - \varphi_0,0,\mu(s)) + \rmO(\rme^{-\beta n}), \qquad s\in\R/\Z,
\]
and the resulting branches lie for each fixed $n$ on a closed loop.

\paragraph{Snaking:} If $\psi$ is not homotopic to a constant function in $C^0(\R/\Z,S^1)$, we can extend the lifting uniquely to a continuous unbounded function $\bar{\psi}:\R^+\to\R^+$. Lifting the first equation in (\ref{e:m}) to $\R$, we obtain
\[
\begin{pmatrix} \varphi_0+(1+\rmO(v^\kappa_0))L+\rmO(\rme^{-\beta L}) \\ v^\kappa_0+\rmO(\rme^{-\beta L}) \\ \rmO(\rme^{-\beta L}), \end{pmatrix} =
\begin{pmatrix} \bar{\psi}(s)+\rmO(|\nu|+|a|) \\ \rmO(|\nu|+|a|) \\ c_1(s)\nu+c_2(s)a+\rmO(\nu^2+a^2) \end{pmatrix}
\]
in $\R\times I\times I$, where the right-hand side now depends on $s\in\R^+$. As above, for each sufficiently large $s\gg1$, we can solve this system uniquely for $(L,v^\kappa_0,\nu)$ with $a=0$ as a function of $s$ when $c_1(s)\neq0$ and for $(L,v^\kappa_0,a)$ with $\nu=0$ as a function of $s$ when $c_1(s)=0$. In both cases, we have the expansion
\[
(L,v^\kappa,\mu) = (\bar{\psi}(s)-\varphi_0,0,\mu(s)) + \rmO(\rme^{-\beta s}), \qquad s\in\R
\]
for $s\gg1$, and the resulting branch is unique and unbounded. This  completes the proof of Theorem~\ref{t:1}.


\section{Reversible hyperbolic periodic orbits are elementary}\label{s:3}

Consider the system
\begin{equation}\label{a:ode}
u_x = f(u), \qquad u\in\R^4
\end{equation}
in $\R^4$, where $f$ is $C^\infty$. We assume that this system admits a linear reverser $\mathcal{R}:\R^4\to\R^4$ so that $\mathcal{R}^2=1$, $\dim\Fix\mathcal{R}=2$, and $f(\mathcal{R}u)=-\mathcal{R}f(u)$ for all $u$. After an appropriate coordinate transformation, we may also assume that $\Fix(\mathcal{R})$ and $\Fix(-\mathcal{R})$ are orthogonal to each other. Assume that $u_*(x)$ is a symmetric periodic orbit with Floquet multipliers $\rme^{\pm\alpha}$ for some $\alpha>0$ and a Floquet multiplier at $1$ (necessarily with algebraic multiplicity two). The following lemma should be well known, but I could not locate a reference.

\begin{lemma}\label{l:1}
The periodic orbit $u_*(x)$ is part of a smooth one-parameter family of periodic orbits $u(x;\kappa)$ with $u(0;\kappa)\in\Fix\mathcal{R}$ for $\kappa$ near zero so that $u_*(0)=u(0;0)$ and $\partial_\kappa u(\cdot;0)\neq0$.
\end{lemma}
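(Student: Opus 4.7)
My plan is to set up a Poincaré return map on an $\mathcal{R}$-invariant cross-section through $p_* := u_*(0)$ and exploit the reversibility of this map by writing it as a composition of two involutions in the spirit of the classical Devaney construction. Take $\Sigma := p_* + f(p_*)^\perp$. Since $p_* \in \Fix\mathcal{R}$ forces $\mathcal{R}f(p_*) = -f(\mathcal{R}p_*) = -f(p_*)$, the hyperplane $\Sigma$ is $\mathcal{R}$-invariant, and the orthogonality of $\Fix\mathcal{R}$ and $\Fix(-\mathcal{R})$ gives $E := \Fix\mathcal{R} \subset \Sigma$. The first-return map $P$ defined near $p_*$ satisfies $\mathcal{R}P\mathcal{R} = P^{-1}$, because forward first-return times from $p$ to $\Sigma$ coincide with backward first-return times from $\mathcal{R}p$ by reversibility of (\ref{a:ode}). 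Setting $P_1 := \mathcal{R}|_\Sigma$ and $P_2 := P \circ \mathcal{R}|_\Sigma$, both are smooth involutions on $\Sigma$ with $P = P_2 P_1$, so that symmetric periodic orbits of (\ref{a:ode}) near $u_*$ correspond exactly to points of $\Fix P_1 \cap \Fix P_2$.

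The dimensions of the fixed-point sets are both $2$. Indeed, $\dim\Fix P_1 = 2$ is immediate. For $P_2$, compute $\det DP_2(p_*) = \det DP(p_*)\cdot\det\mathcal{R}|_\Sigma = 1\cdot(-1) = -1$ using $\det DP(p_*) = 1\cdot e^\alpha\cdot e^{-\alpha} = 1$ and the fact that $\mathcal{R}|_\Sigma$ has eigenvalues $+1,+1,-1$. Since $DP_2(p_*)^2 = I$, its three eigenvalues lie in $\{\pm 1\}$ and multiply to $-1$; the case $DP_2(p_*) = -I$ is excluded because it would force $DP(p_*) = -\mathcal{R}|_\Sigma$ with spectrum $\{+1,-1,-1\}$, contradicting $\mathrm{spec}(DP(p_*)) = \{1, e^\alpha, e^{-\alpha}\}$. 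Hence the eigenvalues of $DP_2(p_*)$ are $(+1,+1,-1)$, yielding $\dim\Fix P_2 = 2$.

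The main work is the transversality $T_{p_*}\Fix P_1 + T_{p_*}\Fix P_2 = T_{p_*}\Sigma$, equivalent to $\dim(T_{p_*}\Fix P_1 \cap T_{p_*}\Fix P_2) = 1$. For $v \in E$ one has $\mathcal{R}v = v$ and hence $DP_2(p_*)v = DP(p_*)v$, so this intersection equals $E \cap \ker(DP(p_*) - I)$. The $1$-eigenspace of $DP(p_*)$ is one-dimensional and $\mathcal{R}|_\Sigma$-invariant (from $\mathcal{R}|_\Sigma DP(p_*)\mathcal{R}|_\Sigma = DP(p_*)^{-1}$), so it lies either in $E$ or in the one-dimensional $\Fix(-\mathcal{R})|_\Sigma \subset E' := \Fix(-\mathcal{R})$. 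In the latter case, lifting back through the projection along $f(p_*)$ would supply a generalized $1$-eigenvector of the monodromy $M$ of $u_*$ inside $E'$; together with the flow direction $f(p_*) \in E'$, this would force the two-dimensional generalized $1$-eigenspace $E_1$ of $M$ to equal $E'$. But the two-dimensional hyperbolic generalized eigenspace $E_h$ of $M$ is $\mathcal{R}$-invariant (since $\mathcal{R}$ swaps the $e^{\pm\alpha}$-eigenspaces) and contains the nonzero vector $v - \mathcal{R}v$ for any $e^\alpha$-eigenvector $v$, producing an element of $E_h \cap E'$, which contradicts $E_h \cap E_1 = \{0\}$.

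With transversality established, $\Fix P_1 \cap \Fix P_2$ is a smooth one-dimensional submanifold of $\Sigma$ through $p_*$. Parametrizing it as $\kappa \mapsto p(\kappa)$ with $p(0) = p_*$ and $p'(0) \neq 0$, and letting $u(\cdot;\kappa)$ denote the solution of (\ref{a:ode}) with $u(0;\kappa) = p(\kappa)$, one obtains the desired smooth family of symmetric periodic orbits; $\partial_\kappa u(0;0) = p'(0) \neq 0$, and smooth dependence of the flow on initial data yields $\partial_\kappa u(\cdot;0) \neq 0$. The main obstacle is the transversality argument, where reversibility of the monodromy must be combined with hyperbolicity of the Floquet multipliers $e^{\pm\alpha}$ to rule out the $1$-eigendirection of the Poincaré linearization falling outside $\Fix\mathcal{R}$.
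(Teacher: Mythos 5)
Your proof is correct, but it takes a genuinely different route from the one in the paper. The paper works with the \emph{half-period} map $\Psi:\Sigma_p\to\Sigma_q$ between sections at $u_*(0)$ and $u_*(T)$, invokes \cite[Proposition~1.6]{D} to reduce the lemma to the single transversality condition $\rmD\Psi(p)(\Fix\mathcal{R})\pitchfork\Fix\mathcal{R}$ in $\Sigma_q$, and verifies that condition by a short Floquet-theoretic contradiction (writing $\rmD\Phi_t(p)=C(t)\rme^{Bt}$ and using that $\mathcal{R}$ swaps the stable and unstable eigenvectors, so non-transversality would force $\alpha=0$). You instead work with the \emph{full} return map $P$ on a single $\mathcal{R}$-invariant section through $u_*(0)$, decompose it as a product of two involutions \`a la Devaney, and obtain the family as the transversal intersection $\Fix P_1\cap\Fix P_2$; this makes the argument self-contained (you do not need the cited proposition), at the cost of the extra bookkeeping on $\dim\Fix P_2$ and the eigenvalue count for $\rmD P_2(p_*)$. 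The underlying mechanism of the contradiction is the same in both proofs: reversibility exchanges the $\rme^{\pm\alpha}$ directions, and $\alpha>0$ rules out the degenerate configuration. One remark: your third paragraph can be shortened considerably. Once you know $\dim\Fix P_1=\dim\Fix P_2=2$ inside the three-dimensional section, the intersection $T_{p_*}\Fix P_1\cap T_{p_*}\Fix P_2$ automatically has dimension at least $2+2-3=1$; since you also show this intersection is contained in $E\cap\ker(\rmD P(p_*)-I)$, which has dimension at most $1$ because the eigenvalue $1$ of $\rmD P(p_*)$ is simple (here $\alpha>0$ enters), the intersection is exactly one-dimensional and transversality follows without the excursion through the generalized eigenspaces of the monodromy. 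Your longer argument is nevertheless valid as written.
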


\begin{proof}
Let $2T>0$ be the period of $u_*(x)$ and denote the flow of (\ref{a:ode}) by $\Phi_t$. We consider sections $\Sigma_p$ and $\Sigma_q$ at $p=u_*(0)$ and $q=u_*(T)$ so that $f(p)$ and $f(q)$ are perpendicular to these sections at $u=p$ and $u=q$, respectively. Let $\Psi:\Sigma_p\to\Sigma_q$ denote the first-return map between these sections. By \cite[Proposition~1.6]{D}, it suffices to show that $\rmD\Psi(p)(\Fix\mathcal{R})$ is transverse to $\Fix\mathcal{R}$ in $\Sigma_q$.

Assume that this transversality condition is not met, then necessarily $\rmD\Psi(p)(\Fix\mathcal{R})=\Fix\mathcal{R}$. Floquet theory implies that $\rmD\Phi_t(p)=C(t)\rme^{Bt}$ where $C(2T)=C(0)$ is the identity. Let $v^\mathrm{s,u}_0$ denote the stable and unstable eigenvectors of $\rmD\Phi_{2T}(p)$ and define $v^\mathrm{s,u}_T:=C(T)v^\mathrm{s,u}_0$. Reversibility shows that we can pick $v^\mathrm{u}_0:=\mathcal{R}v^\mathrm{s}_0$ so that $v^\mathrm{s}_0+v^\mathrm{u}_0\in\Fix\mathcal{R}$ and, furthermore, $v^\mathrm{u}_T=\mathcal{R}v^\mathrm{s}_T$. Our assumption on $\rmD\Psi(p)$ implies that $\rmD\Phi_{T}(p)(v^\mathrm{s}_0+v^\mathrm{u}_0)\in\Fix\mathcal{R}$. Since $\rmD\Phi_{T}(p)(v^\mathrm{s}_0+v^\mathrm{u}_0)=\rme^{-\alpha/2}v^\mathrm{s}_T+\rme^{\alpha/2}v^\mathrm{u}_T$, we conclude that $\rme^{-\alpha}v^\mathrm{s}_T=v^\mathrm{s}_T$, which is possible only if $\alpha=0$. This contradicts our assumption and therefore completes the proof of the lemma.
\end{proof}


\bibliographystyle{sandstede}
\bibliography{Snaking_Continuation}


\end{document}